\theoremstyle{plain}
\newtheorem{theorem}                {Theorem}      [section]
\newtheorem*{theorem*}                {Theorem \ref{thm:appl}}
\newtheorem{proposition}  [theorem]  {Proposition}
\newtheorem{corollary}    [theorem]  {Corollary}
\theoremstyle{definition}
\newtheorem{remark}       [theorem]  {Remark}
\newtheorem{definition}   [theorem]  {Definition}
\DeclareMathOperator{\trace}{trace} 
\DeclareMathOperator{\Div}{div} 
\DeclareMathOperator{\ricci}{Ricci}
\numberwithin{equation}{section}
\begin{document}

\title[On biconservative surfaces in $3$-dimensional space forms]
{On biconservative surfaces in $3$-dimensional space forms}

\author{Dorel~Fetcu}
\author{Simona~Nistor}
\author{Cezar~Oniciuc}

\address{Department of Mathematics and Informatics\\
Gh. Asachi Technical University of Iasi\\
Bd. Carol I, 11 \\
700506 Iasi, Romania} \email{dfetcu@math.tuiasi.ro}

\address{Faculty of Mathematics\\ Al. I. Cuza University of Iasi\\
Bd. Carol I, 11 \\ 700506 Iasi, Romania} \email{simona.nistor@math.uaic.ro}

\address{Faculty of Mathematics\\ Al. I. Cuza University of Iasi\\
Bd. Carol I, 11 \\ 700506 Iasi, Romania} \email{oniciucc@uaic.ro}

\thanks{}

\subjclass[2010]{53A10, 53C42}

\keywords{Biconservative surfaces, minimal surfaces, real space forms}

\begin{abstract} We consider biconservative surfaces $\left(M^2,g\right)$ in a space form $N^3(c)$, with mean curvature function $f$ satisfying $f>0$ and $\nabla f\neq 0$ at any point, and determine a certain Riemannian metric $g_r$ on $M$ such that $\left(M^2,g_r\right)$ is a Ricci surface in $N^3(c)$. We also obtain an intrinsic characterization of these biconservative surfaces. 
\end{abstract}

\maketitle

\section{Introduction}

In the last few years, from the theory of {\it biharmonic submanifolds}, arised the study of {\it biconservative submanifolds} that imposed itself as a very promising and interesting research topic through papers like \cite{CMOP,Fu,LMO,MOR,MOR2}. A {\it biharmonic map} $\psi:(M,g)\rightarrow(N,h)$ between two Riemannian manifolds is a critical point
of the \textit{bienergy functional}
$$
E_2:C^{\infty}(M,N)\rightarrow\mathbb{R},\quad E_{2}(\psi)=\frac{1}{2}\int_{M}|\tau(\psi)|^{2}\ dv,
$$
where $\tau(\psi)$ is the tension field of $\psi$, and it is characterized by the vanishing of its {\it bitension field} $\tau_2(\psi)$. When $\psi:(M,g)\rightarrow (N,h)$ is a biharmonic isometric immersion, $M$ is called a biharmonic submanifold of $N$.

Now, if $\psi:M\rightarrow(N,h)$ is a fixed map, then $E_2$ can be thought as a functional on the set of all Riemannian metrics on $M$. This new functional's critical points are Riemannian metrics determined by the vanishing of the {\it stress-energy tensor} $S_2$. This tensor satisfies
$$
\Div S_2=-\langle\tau_2(\psi),d\psi\rangle.
$$ 
If $\Div S_2=0$ for a submanifold $M$ in $N$, then $M$ is called a biconservative submanifold and it is characterized by the fact that the tangent part of its bitension field vanishes.

In the case when the ambient space is a $3$-dimensional space form $N^3(c)$, while surfaces with constant mean curvature (CMC surfaces) are trivially biconservative, the study of non-CMC biconservative surfaces is not trivial. The explicit local equations of these surfaces were obtained in \cite{CMOP} and \cite{Fu}. Moreover, in \cite{CMOP} it is shown that the Gaussian curvature of a biconservative surface in a $3$-dimensional space form satisfies a certain equation that seems to be very similar with that used by G.~Ricci-Curbastro \cite{R} in $1895$ to characterize minimal surfaces in $\mathbb{R}^3$. As we will see in the following, we can use this property of biconservative surfaces to prove results similar to those in \cite{L}, \cite{MM}, or \cite{R}, in this context.

The paper is organized as follows. After a short section where we recall some notions and results on biconservative submanifolds, we show, in the third section,  that on a non-CMC biconservative surface $\left(M^2,g\right)$ in a space form $N^3(c)$ we can determine a new Riemannian metric $g_r$ such that $\left(M^2,g_r\right)$ is a Ricci surface in $N^3(c)$. Then, in the last section of the paper, we obtain an intrinsic characterization of non-CMC biconservative surfaces in a space form.

\noindent \textbf{Acknowledgments.} The authors wish to thank Sergiu Moroianu for useful comments and discussions. 

\section{Preliminaries}

As we have already seen, biharmonic maps $\psi:(M,g)\rightarrow(N,h)$, as suggested by J.~Eells and J.~H.~Sampson \cite{JEJS}, are the critical points of the bienergy functional. The corresponding Euler-Lagrange equation, obtained in \cite{GYJ}, is
$$
\tau_{2}(\psi)=-\Delta\tau(\psi)-\trace R^N(d\psi,\tau(\psi))d\psi=0,
$$
where $\tau_{2}(\psi)$ is the \textit{bitension field} of $\psi$, $\Delta=-\trace(\nabla^{\psi})^2 =-\trace(\nabla^{\psi}\nabla^{\psi}-\nabla^{\psi}_{\nabla})$ is the rough Laplacian defined on
sections of $\psi^{-1}(TN)$ and $R^N$ is the curvature tensor of $N$, given by $R^N(X,Y)Z=[\bar\nabla_X,\bar\nabla_Y]Z-\bar\nabla_{[X,Y]}Z$. 

The {\it stress-energy tensor} associated to a variational problem, described in \cite{H} by D.~Hilbert, is a symmetric $2$-covariant tensor $S$ conservative at critical points, i.e., $S$ satisfies $\Div S=0$ at these points. 

P.~Baird and J.~Eells \cite{BE} and A.~Sanini \cite{S} used such a tensor given by 
$$
S=\frac{1}{2}|d\psi|^2g-\psi^{\ast}h
$$ 
to study harmonic maps. It has been proved that $S$ satisfies the equation
$$
\Div S=-\langle\tau(\psi),d\psi\rangle,
$$
which implies that $\Div S$ vanishes when $\psi$ is harmonic. When $\psi:M\to N$ is an isometric immersion, $\tau(\psi)$ is normal and then $\Div S=0$ always holds in this case.

Consider now the stress-energy tensor $S_2$ of the bienergy. This tensor, that was studied for the first time in \cite{J} and then in papers like \cite{CMOP,Fu,LMO,MOR,MOR2}, is given by
\begin{align*}
S_2(X,Y)=&\frac{1}{2}|\tau(\psi)|^2\langle X,Y\rangle+\langle d\psi,\nabla\tau(\psi)\rangle\langle X,Y\rangle\\&-\langle d\psi(X),\nabla_Y\tau(\psi)\rangle-\langle d\psi(Y),\nabla_X\tau(\psi)\rangle
\end{align*}
and it satisfies
$$
\Div S_2=-\langle\tau_2(\psi),d\psi\rangle.
$$
If $\psi:M\to N$ is an isometric immersion, then we have $\Div S_2=-\tau_2(\psi)^{\top}$ and, therefore, $\Div S_2$ does not automatically vanish.

\begin{definition} A submanifold $\psi:M\rightarrow N$ of a Riemannian manifold $N$ is called a {\it biconservative submanifold} if $\Div S_2=0$, i.e., $\tau_2(\psi)^{\top}=0$.
\end{definition}

The biharmonic equation $\tau_2(\psi)=0$ of a submanifold $\psi:M\rightarrow N$ can be decomposed in its normal and tangent part (see \cite{BMO,O}). In the case of hypersurfaces $M$ in $N$, we get
$$
\Delta f-f|A|^2+f\ricci^N(\eta,\eta)=0
$$
and
$$
2A(\nabla f)+f\nabla f-2f(\ricci^N(\eta))^{\top}=0,
$$
where $\eta$ is the unit normal of $M$ in $N$, $A$ is the shape operator, $f=\trace A$ is the mean curvature function, and $(\ricci^N(\eta))^{\top}$ is the tangent component of the Ricci curvature of $N$ in the direction of $\eta$.

From this decomposition, it follows that a surface $\psi:M^2\to N^3(c)$ in a space form $N^3(c)$, i.e., a $3$-dimensional simply connected complete manifold with constant sectional curvature $c$, is biconservative if and only if
\begin{equation}\label{eq:bicons}
A(\nabla f)=-\frac{f}{2}\nabla f.
\end{equation}

It is then easy to see that any CMC surface in $N^3(c)$ is biconservative and, therefore, when studying biconservative surfaces in space forms we are interested in the non-CMC case. We should, however, mention that, in the general case, if $M$ is a biconservative surface in an $n$-dimensional Riemannian manifold $N$, then it has constant mean curvature if and only if the $(2,0)$-part of the quadratic form $Q$, defined on $M$ by $Q(X,Y)=\langle B(X,Y),H\rangle$, is holomorphic, where $B$ is the second fundamental form of $M$ in $N$ and $H$ is the mean curvature vector field (see \cite{LO,MOR2}).

We end this section recalling the following result on non-CMC biconservative surfaces in $N^3(c)$ that we will use later on.

\begin{theorem}[\cite{CMOP}]\label{thm:CMOP} Let $M^2$ be a non-CMC biconservative surface in a space form $N^3(c)$. There exists an open subset $U\subset M$ such that, on $U$, the Gaussian curvature $K$ of $M$ satisfies
\begin{equation}\label{eq:Gauss}
K=\det A+c=-\frac{3f^2}{4}+c
\end{equation}
and
\begin{equation}\label{cond:K}
(c-K)\Delta K-|\nabla K|^2-\frac{8}{3}K(c-K)^2=0,
\end{equation}
where $A$ is the shape operator of $M$ in $N$, $f=\trace A$ is the mean curvature function, and $\Delta$ is the Laplace-Beltrami operator on $M$.
\end{theorem}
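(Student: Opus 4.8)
The plan is to restrict attention to the open set where $f$ and $\nabla f$ are nowhere zero, where the biconservativity condition can be combined with the Gauss and Codazzi equations. First I would note that, since $M$ is non-CMC, the set $\{\nabla f\neq 0\}$ is open and nonempty, and on it $f$ cannot vanish identically on any open piece (otherwise $\nabla f$ would vanish there too); hence there is a connected open set $U$ on which $f\neq 0$ and $\nabla f\neq 0$ everywhere, and, after possibly reversing the orientation, $f>0$ on $U$. On $U$, equation \eqref{eq:bicons} says precisely that $X_1:=\nabla f/|\nabla f|$ is a principal direction with principal curvature $\lambda_1=-f/2$; completing $X_1$ to a positively oriented orthonormal frame $\{X_1,X_2\}$, the field $X_2$ is then principal with $\lambda_2=\trace A-\lambda_1=3f/2$. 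Consequently $\det A=\lambda_1\lambda_2=-3f^2/4$, and the Gauss equation for a surface in $N^3(c)$ gives $K=\det A+c=-3f^2/4+c$, i.e.\ \eqref{eq:Gauss}.

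The heart of the proof is to read off the intrinsic geometry of $U$ from the Codazzi equation. Since $\nabla f$ is parallel to $X_1$, one has $X_2(f)=\langle\nabla f,X_2\rangle=0$. Expressing the Levi-Civita connection of $U$ in the frame $\{X_1,X_2\}$, substituting the (now fully known) shape operator into $(\nabla_{X_1}A)X_2=(\nabla_{X_2}A)X_1$, and separating the $X_1$- and $X_2$-components, I expect to obtain --- using $f>0$ and $\lambda_1\neq\lambda_2$ --- that
\[
\nabla_{X_1}X_1=\nabla_{X_1}X_2=0,\qquad \nabla_{X_2}X_1=\beta X_2,\qquad \nabla_{X_2}X_2=-\beta X_1,\qquad \beta=-\frac{3\,X_1(f)}{4f}.
\]
I expect this to be the main obstacle: it is where \eqref{eq:bicons} is upgraded to genuine intrinsic information, and it requires tracking carefully which rotation coefficients survive and with which constants.

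With the connection in hand, I would compute the Gaussian curvature directly from the structure equations, getting $K=-\bigl(X_1(\beta)+\beta^2\bigr)$; expanding $\beta=-3X_1(f)/(4f)$ turns this into an identity relating $X_1(X_1(f))$, $X_1(f)$ and $f$. Finally I would translate everything back into $K$: from $c-K=3f^2/4$ one gets $\nabla K=-\tfrac32 f\,\nabla f$, hence $|\nabla K|^2=\tfrac94 f^2(X_1(f))^2$, and a short computation using the connection coefficients above gives $\Delta K=\tfrac38 (X_1(f))^2+\tfrac32 f\,X_1(X_1(f))$ (with the sign convention for the Laplacian fixed above). Substituting these expressions, together with $c-K=3f^2/4$, into the left-hand side of \eqref{cond:K} and using the curvature identity of the previous step to eliminate the second-order term $X_1(X_1(f))$, all terms cancel and \eqref{cond:K} follows. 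Everything after the Codazzi computation is routine bookkeeping.
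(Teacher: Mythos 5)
Your proposal is correct, and it follows essentially the route the paper relies on: the paper only cites this result from \cite{CMOP}, but the ingredients you derive (the principal decomposition $AX_1=-\tfrac{f}{2}X_1$, $AX_2=\tfrac{3f}{2}X_2$, the vanishing of $X_2f$, and the connection formulas \eqref{eq:nabla}) are exactly the facts the paper quotes from \cite{CMOP} in the proof of Theorem \ref{thm:Nc}, and your Codazzi computation and the final substitution into \eqref{cond:K} check out with the paper's sign convention $\Delta=-\operatorname{div}\operatorname{grad}$. No gaps.
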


\begin{remark} It is easy to see, from \eqref{eq:Gauss}, that the Gaussian curvature $K$ of a non-CMC biconservative surface in $N^3(c)$ satisfies $c-K>0$.
\end{remark}

\noindent {\bf Convention.} Henceforth, all surfaces are assumed to be connected and oriented.

\section{Biconservativity and minimality in space forms}

A Riemannian surface $\left(M^2,g\right)$ with Gaussian curvature $K$ is said to satisfy the {\it Ricci condition} if $c-K>0$ and the metric $(c-K)^{1/2}g$ is flat, where $c\in\mathbb{R}$ is a constant. In this case, $\left(M^2,g\right)$ is called a {\it Ricci surface}. G. Ricci-Curbastro \cite{R} proved that, when $c=0$, a surface satisfying the Ricci condition can be locally isometrically embedded in $\mathbb{R}^3$ as a minimal surface. Actually, there exists a one-parameter family of such embeddings. H. B. Lawson \cite[Theorem 8]{L} generalized this result by showing that the Ricci condition is an intrinsic characterization of minimal surfaces in space forms $N^3(c)$, with constant sectional curvature $c$ (see also~\cite{Sak}).

In the following, we will see that the Ricci condition, as stated above, is equivalent to an equation that looks very much like equation \eqref{cond:K}, satisfied by the Gaussian curvature of a non-CMC biconservative surface in a space form $N^3(c)$. Then, a natural question is whether there exists a simple way to transform surfaces satisfying \eqref{cond:K} in Ricci surfaces in $N^3(c)$. As it will turn out, the answer to this question is affirmative.

We will first briefly recall some known results in conformal geometry. Let $(M^2,g)$ be a Riemannian surface with Gaussian curvature $K$ and Laplacian $\Delta$. Consider a new Riemannian metric $\bar g=e^{2\varphi}g$ on $M$, where $\varphi\in C^{\infty}(M)$. If $\bar\Delta$ and $\bar K$ are the Laplacian and the Gaussian curvature, respectively, of $\bar g$, then we have (see \cite{B}):
\begin{equation}\label{eq:confLaplacian}
\bar\Delta=e^{-2\varphi}\Delta
\end{equation}
and
\begin{equation}\label{eq:confK}
\bar K=e^{-2\varphi}(K+\Delta\varphi).
\end{equation}

The following proposition points out some equivalent characterizations of Ricci surfaces.

\begin{proposition}\label{p:Ricci_original} Let $\left(M^2,g\right)$ be a Riemannian surface such that its Gaussian curvature $K$ satisfies $c-K>0$, where $c\in\mathbb{R}$ is a constant. Then, the following conditions are equivalent$:$
\begin{itemize}
\item[(i)] $K$ satisfies 
\begin{equation}\label{cond:Ricci}
(c-K)\Delta K-|\nabla K|^2-4K(c-K)^2=0;
\end{equation}

\item[(ii)] $K$ satisfies
\begin{equation}\label{eq:MM}
\Delta\log(c-K)+4K=0;
\end{equation}

\item[(iii)] the metric $(c-K)^{1/2}g$ is flat.
\end{itemize}
Moreover, if $c=0$, then we also have a fourth equivalent condition$:$
\begin{itemize}
\item[(iv)] the metric $(-K)g$ has constant Gaussian curvature equal to $1$.
\end{itemize}
\end{proposition}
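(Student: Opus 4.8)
The plan is to establish the cycle of implications (i) $\Leftrightarrow$ (ii), (ii) $\Leftrightarrow$ (iii), and then, under the extra hypothesis $c=0$, (iii) $\Leftrightarrow$ (iv). The equivalence (i) $\Leftrightarrow$ (ii) is purely a calculation with the chain rule for the Laplacian: writing $u=c-K>0$, one has $\Delta\log u=\frac{\Delta u}{u}-\frac{|\nabla u|^2}{u^2}$, and since $\nabla u=-\nabla K$, $\Delta u=-\Delta K$, multiplying through by $u^2=(c-K)^2$ turns \eqref{eq:MM} into $-(c-K)\Delta K-|\nabla K|^2+4K(c-K)^2=0$, which is \eqref{cond:Ricci} up to an overall sign. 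So this step is immediate and reversible wherever $c-K>0$.

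For (ii) $\Leftrightarrow$ (iii), I would use the conformal formula \eqref{eq:confK}. The metric $\bar g=(c-K)^{1/2}g$ corresponds to the conformal factor $e^{2\varphi}=(c-K)^{1/2}$, i.e. $\varphi=\frac14\log(c-K)$. By \eqref{eq:confK}, $\bar g$ is flat (that is, $\bar K=0$) if and only if $K+\Delta\varphi=0$, i.e. $K+\frac14\Delta\log(c-K)=0$, which is exactly \eqref{eq:MM} after multiplying by $4$. Hence (ii) and (iii) are equivalent, and in fact this shows (iii) is just a repackaging of (ii); note $c-K>0$ is what guarantees $\varphi$ is a well-defined smooth function, so the standing hypothesis is used here.

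Finally, for the case $c=0$: here \eqref{cond:Ricci} and \eqref{eq:MM} read $-K\Delta K-|\nabla K|^2-4K^3=0$ and $\Delta\log(-K)+4K=0$ (with $-K>0$). For (iv), consider $\tilde g=(-K)g$, so $e^{2\varphi}=-K$, $\varphi=\frac12\log(-K)$. By \eqref{eq:confK}, $\tilde K=(-K)^{-1}(K+\frac12\Delta\log(-K))$, and this equals the constant $1$ precisely when $K+\frac12\Delta\log(-K)=-K$, i.e. $\Delta\log(-K)+4K=0$, which is \eqref{eq:MM} with $c=0$. So (iv) $\Leftrightarrow$ (ii) in this case, closing the chain. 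I expect no real obstacle here; the only points requiring care are the bookkeeping of constants in the conformal factor (the exponents $1/4$ versus $1/2$) and verifying that $c-K>0$ (respectively $-K>0$) is invoked exactly where smoothness of $\log(c-K)$ is needed, so that each implication is genuinely an equivalence rather than a one-way statement.
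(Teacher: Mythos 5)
Your overall strategy --- (i)$\Leftrightarrow$(ii) by the chain rule for $\Delta\log$, then (ii)$\Leftrightarrow$(iii) and (ii)$\Leftrightarrow$(iv) via the conformal curvature formula \eqref{eq:confK} with conformal factors $(c-K)^{1/2}$ and $-K$ --- is exactly the paper's; the paper merely packages the last two steps into the one-parameter family $g_r=(c-K)^rg$ and reads off $r=1/2$ and $r=1$. However, your first step as written does not close, because of a sign-convention clash. The paper's Laplacian is the geometer's one, $\Delta=-\trace\nabla^2$ (this is forced by the form of \eqref{eq:confK}, which you use, and is visible in the explicit computation \eqref{eq:delta_inter} in the proof of Theorem \ref{thm:Nc}); with that convention
$$
\Delta\log u=\frac{\Delta u}{u}+\frac{|\nabla u|^2}{u^2},
$$
so that $\Delta\log(c-K)=\bigl((K-c)\Delta K+|\nabla K|^2\bigr)/(c-K)^2$ and \eqref{eq:MM} multiplied by $(c-K)^2$ is exactly $-1$ times \eqref{cond:Ricci}. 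You instead used the analyst's identity $\Delta\log u=\Delta u/u-|\nabla u|^2/u^2$, which yields $-(c-K)\Delta K-|\nabla K|^2+4K(c-K)^2=0$; this is \emph{not} \eqref{cond:Ricci} up to an overall sign, since multiplying by $-1$ flips the first and third terms but leaves the $|\nabla K|^2$ term with the wrong sign relative to them, so the two PDEs are genuinely different. Note also that you cannot repair this by declaring $\Delta$ to be the analyst's Laplacian throughout: then \eqref{eq:confK} would have to read $\bar K=e^{-2\varphi}(K-\Delta\varphi)$ and the same mismatch would reappear in your steps (ii)$\Leftrightarrow$(iii) and (ii)$\Leftrightarrow$(iv). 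Once you adopt the paper's sign for $\Delta$, every step of your argument is correct and coincides with the paper's proof.
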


\begin{proof} First, we easily get that
$$
\Delta\log(c-K)=\frac{(K-c)\Delta K+|\nabla K|^2}{(c-K)^2},
$$
which implies that $(i)$ and $(ii)$ are equivalent.

Next, in the same way as in \cite{MM}, we consider a family of Riemannian metrics on $M$ given by $g_r=(c-K)^rg$, where $r\in\mathbb{R}$ is a constant. From equation \eqref{eq:confK}, one obtains that the Gaussian curvature curvature $K_r$ of $g_r$ is given by
$$
K_r=(c-K)^{-r}\left( K+\frac{1}{2}\Delta\log(c-K)^r\right).
$$
If $(ii)$ holds then $K_r=(1-2r)(c-K)^{-r}K$ and, therefore, $(ii)$ implies $(iii)$ and $(iv)$. Conversely, it is easy to see, from the expression of $K_r$, that $(iii)$ implies $(ii)$ and also, if $c=0$, $(iv)$ implies $(ii)$.
\end{proof}

\begin{remark} Proposition \ref{p:Ricci_original} was first proved in the case when $c=0$ in \cite{MM}.
\end{remark}

Working exactly as in the proof of Proposition \ref{p:Ricci_original} we get our following result.

\begin{proposition}\label{p:Ricci} Let $\left(M^2,g\right)$ be a Riemannian surface such that its Gaussian curvature $K$ satisfies $c-K>0$, where $c\in\mathbb{R}$ is a constant. Then, the following conditions are equivalent$:$
\begin{itemize}
\item[(i)] $K$ satisfies equation \eqref{cond:K}$;$

\item[(ii)] $\Delta\log(c-K)+(8/3)K=0$$;$

\item[(iii)] the metric $(c-K)^{3/4}g$ is flat.
\end{itemize}
Moreover, if $c=0$, then we also have a fourth equivalent condition$:$
\begin{itemize}
\item[(iv)] the metric $(-K)g$ has constant Gaussian curvature equal to $1/3$.
\end{itemize}
\end{proposition}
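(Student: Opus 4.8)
The proof should mirror exactly the argument given for Proposition \ref{p:Ricci_original}, only with the numerical coefficients adjusted: the constant $4$ in \eqref{cond:Ricci} and \eqref{eq:MM} is replaced by $8/3$, and the exponent $1/2$ in the flat metric is replaced by $3/4$. So the plan is as follows.

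First, I would establish the equivalence of (i) and (ii). Using the identity
\[
\Delta\log(c-K)=\frac{(K-c)\Delta K+|\nabla K|^2}{(c-K)^2},
\]
which is purely a computation with the chain rule for $\Delta$ applied to $\log(c-K)$ (and valid since $c-K>0$), equation \eqref{cond:K} is obtained by multiplying $\Delta\log(c-K)+(8/3)K=0$ by $(c-K)^2$ and rearranging; conversely dividing \eqref{cond:K} by $(c-K)^2$ recovers (ii). This is immediate and requires no new idea.

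Next, for the implications involving (iii) and (iv), I would introduce the family of conformal metrics $g_r=(c-K)^rg$ for $r\in\mathbb{R}$, as in \cite{MM} and in the proof of Proposition \ref{p:Ricci_original}. Applying the conformal change formula \eqref{eq:confK} with $e^{2\varphi}=(c-K)^r$, i.e. $\varphi=\tfrac12\log(c-K)^r$, gives the Gaussian curvature $K_r$ of $g_r$ as
\[
K_r=(c-K)^{-r}\Bigl(K+\tfrac12\Delta\log(c-K)^r\Bigr).
\]
If (ii) holds, then $\tfrac12\Delta\log(c-K)^r=\tfrac{r}{2}\Delta\log(c-K)=-\tfrac{4r}{3}K$, so $K_r=(1-\tfrac{4r}{3})(c-K)^{-r}K$. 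Taking $r=3/4$ makes $K_r\equiv0$, which is (iii); taking $r=1$ and $c=0$ makes $K_r=(-K)^{-1}\cdot\tfrac13\cdot(-K)\cdot(-1)$... more carefully, with $c=0$ we get $K_1=(1-4/3)(-K)^{-1}K=(-1/3)(-K)^{-1}K=\tfrac13(-K)^{-1}(-K)=1/3$, which is (iv). Conversely, reading the displayed formula for $K_r$ the other way: if $K_r=0$ for $r=3/4$ then $K+\tfrac38\Delta\log(c-K)=0$, i.e. (ii); and if $c=0$ and $K_1\equiv 1/3$ then $(-K)^{-1}(K+\tfrac12\Delta\log(-K))=1/3$, which rearranges to $\Delta\log(-K)+\tfrac83 K=0$, again (ii). This closes the cycle of equivalences.

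There is no serious obstacle here: the entire argument is a transcription of the proof of Proposition \ref{p:Ricci_original} with $4\rightsquigarrow 8/3$ and $1/2\rightsquigarrow 3/4$, and indeed the text already announces this by saying ``Working exactly as in the proof of Proposition \ref{p:Ricci_original}.'' The only point deserving a moment's care is bookkeeping the constants so that the coefficient $8/3$ in \eqref{cond:K} matches the coefficient $(8/3)K$ in (ii) and forces the exponent $3/4=\tfrac{1}{2}\cdot\tfrac{3}{2}$ in (iii) — equivalently, that the unique $r$ killing $K_r$ is the one with $1-\tfrac{4r}{3}=0$. Accordingly I would keep the proof to a few lines, citing the identity for $\Delta\log(c-K)$, the formula for $K_r$ from \eqref{eq:confK}, and the choice $r=3/4$ (and $r=1$, $c=0$ for the fourth condition), exactly as done above.
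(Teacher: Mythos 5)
Your proposal is correct and is exactly the argument the paper intends: the paper gives no separate proof of Proposition \ref{p:Ricci}, stating only that it follows ``working exactly as in the proof of Proposition \ref{p:Ricci_original},'' and your transcription with $4\rightsquigarrow 8/3$, the flat exponent $r=3/4$, and $r=1$ for condition (iv) carries out that argument with the constants correctly bookkept.
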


Now, we can state our first main result.

\begin{theorem}\label{thm:r3} Let $\left(M^2,g\right)$ be a Riemannian surface with negative Gaussian curvature $K$ that satisfies
\begin{equation}\label{cond:Kr3}
K\Delta K+|\nabla K|^2+\frac{8}{3}K^3=0.
\end{equation}
Then $\left(M^2,(-K)^{1/2}g\right)$ is a Ricci surface in $\mathbb{R}^3$.
\end{theorem}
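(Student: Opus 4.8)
The plan is to recognize that the hypothesis \eqref{cond:Kr3} is nothing but condition $(i)$ of Proposition \ref{p:Ricci} in the special case $c=0$, and then to feed the equivalent conditions $(ii)$ and $(iii)$ of that proposition into the conformal change formula \eqref{eq:confK}.

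First I would rewrite \eqref{cond:Kr3} as $(0-K)\Delta K-|\nabla K|^2-\frac{8}{3}K(0-K)^2=0$, which is exactly equation \eqref{cond:K} with $c=0$; since $K<0$ we indeed have $0-K>0$, so Proposition \ref{p:Ricci} (with $c=0$) applies. It then furnishes the two facts I shall use: the identity $\Delta\log(-K)=-\frac{8}{3}K$ (condition $(ii)$) and the flatness of the metric $(-K)^{3/4}g$ (condition $(iii)$).

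Next, I would set $g_r=(-K)^{1/2}g$, i.e.\ $g_r=e^{2\varphi}g$ with $\varphi=\frac{1}{4}\log(-K)$, and use \eqref{eq:confK} together with $\Delta\log(-K)=-\frac{8}{3}K$ to compute the Gaussian curvature $K_r$ of $g_r$:
$$
K_r=e^{-2\varphi}\bigl(K+\Delta\varphi\bigr)=(-K)^{-1/2}\Bigl(K+\tfrac{1}{4}\Delta\log(-K)\Bigr)=\tfrac{1}{3}(-K)^{-1/2}K.
$$
Since $K<0$ everywhere, this gives $K_r<0$, hence $0-K_r>0$, and moreover $-K_r=\frac{1}{3}(-K)^{1/2}$, so $(-K_r)^{1/2}=\frac{1}{\sqrt{3}}(-K)^{1/4}$ and therefore
$$
(0-K_r)^{1/2}\,g_r=\tfrac{1}{\sqrt{3}}\,(-K)^{1/4}\cdot(-K)^{1/2}\,g=\tfrac{1}{\sqrt{3}}\,(-K)^{3/4}\,g.
$$
By Proposition \ref{p:Ricci}$(iii)$ the metric $(-K)^{3/4}g$ is flat, and a positive constant multiple of a flat metric is flat, so $(0-K_r)^{1/2}g_r$ is flat. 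Thus $\left(M^2,g_r\right)$ satisfies the Ricci condition with $c=0$, and by the classical result of G.~Ricci-Curbastro \cite{R} (see also H.~B.~Lawson \cite[Theorem 8]{L}) it can be locally isometrically embedded in $\mathbb{R}^3$ as a minimal surface (in fact via a one-parameter family of such embeddings); that is, $\left(M^2,g_r\right)$ is a Ricci surface in $\mathbb{R}^3$.

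I do not anticipate a genuine obstacle: the argument is essentially the bookkeeping of conformal factors. The only points that require attention are the normalization convention in \eqref{eq:confK} (the geometer's Laplacian, already fixed by the computation in the proof of Proposition \ref{p:Ricci_original}), the use of the hypothesis $K<0$ to pass from $K_r=\frac{1}{3}(-K)^{-1/2}K$ to $K_r<0$, and the elementary observation that rescaling a flat metric by a positive constant preserves flatness.
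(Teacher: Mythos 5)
Your proof is correct. It follows the same broad strategy as the paper (a conformal change $g\mapsto(-K)^{1/2}g$ combined with Propositions \ref{p:Ricci_original} and \ref{p:Ricci}), but the verification step is genuinely different. The paper keeps $r$ free in the family $g_r=(-K)^rg$, computes $K_r=-\frac{3-4r}{3}(-K)^{1-r}$, then uses the conformal transformation of the Laplacian \eqref{eq:confLaplacian} to evaluate $\Delta_r\log(-K_r)$ and checks criterion $(ii)$ of Proposition \ref{p:Ricci_original}; imposing $\Delta_r\log(-K_r)+4K_r=0$ forces $r=1/2$, so the exponent is \emph{derived} rather than guessed. You instead fix $r=1/2$ from the outset and check criterion $(iii)$: your identity $(-K_{1/2})^{1/2}g_{1/2}=\frac{1}{\sqrt{3}}(-K)^{3/4}g$, together with the flatness of $(-K)^{3/4}g$ from Proposition \ref{p:Ricci}$(iii)$ and the observation that a positive constant rescaling preserves flatness, closes the argument without ever transforming the Laplacian. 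Your route is shorter and more elementary; the paper's buys an explanation of where $1/2$ comes from (and the observation, recorded in the subsequent remark, that it is essentially the only constant exponent that works). Your computed value $K_{1/2}=\frac{1}{3}(-K)^{-1/2}K=-\frac{1}{3}(-K)^{1/2}$ agrees with the paper's, and you correctly check the sign condition $-K_{1/2}>0$ before invoking the Ricci condition.
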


\begin{proof} From Proposition \ref{p:Ricci_original}, one can see that suffices to show that there exists a Riemannian metric on $M$, conformally equivalent to $g$, that satisfies \eqref{eq:MM}.

In order to find such a metric, let us consider again the metrics $g_r=(-K)^rg$, with $r\in\mathbb{R}$. From \eqref{eq:confK} and Proposition \ref{p:Ricci}, one obtains that the Gaussian curvature curvature $K_r$ of $g_r$ is given by
$$
K_r=(-K)^{-r}\left(K+\frac{1}{2}\Delta\log(-K)^r\right)=-\frac{3-4r}{3}(-K)^{1-r}.
$$

Assume that $3-4r>0$, i.e., $K_r<0$, and then, using equations \eqref{eq:confLaplacian} and \eqref{cond:Kr3} and Proposition~\ref{p:Ricci}, we can compute
\begin{align*}
\Delta_r\log(-K_r)=&\Delta_r\log\left(\frac{3-4r}{3}(-K)^{1-r}\right)=(1-r)\Delta_r\log(-K)\\
=&(1-r)(-K)^{-r}\Delta\log(-K)\\=&\frac{8(1-r)}{3}(-K)^{1-r},
\end{align*}
where $\Delta_r$ is the Laplacian of $g_r$. Now, equation \eqref{eq:MM} becomes
$$
0=\Delta_r\log(-K_r)+4K_r=\Delta_r\log(-K_r)-\frac{4(3-4r)}{3}(-K)^{1-r}=\frac{4(2r-1)}{3}(-K)^{1-r}
$$
and we get that $r=1/2$.

We have just proved that $\left(M^2,g_{1/2}=(-K)^{1/2}g\right)$ is a Ricci surface with Gaussian curvature $K_{1/2}=-(1/3)(-K)^{1/2}<0$.
\end{proof}

From Theorems \ref{thm:CMOP} and \ref{thm:r3}, one obtains the following corollary.

\begin{corollary} Let $\left(M^2,g\right)$ be a biconservative surface in $\mathbb{R}^3$, where $g$ is the induced metric on $M$. If $f(p)>0$ and $(\nabla f)(p)\neq 0$ at any point $p\in M$, where $f$ is the mean curvature function, then $\left(M^2,(-K)^{1/2}g\right)$ is a Ricci surface.
\end{corollary}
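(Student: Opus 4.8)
The plan is to deduce the Corollary directly from Theorem~\ref{thm:CMOP} (specialized to $c=0$) and Theorem~\ref{thm:r3}; no genuinely new computation is needed beyond a substitution. First I would observe that the standing hypotheses $f>0$ and $\nabla f\neq 0$ at every point make $M$ a non-CMC biconservative surface, so Theorem~\ref{thm:CMOP} is available. Since $\nabla f$ never vanishes, the open set $U$ in that theorem can be taken to be all of $M$: the equality $K=\det A+c$ is just the Gauss equation, while the biconservativity condition \eqref{eq:bicons} says that $-f/2$ is a principal curvature with principal direction $\nabla f$, hence the other principal curvature equals $3f/2$ and $\det A=-3f^2/4$ pointwise; with $c=0$ this gives $K=-3f^2/4$ on all of $M$, so $K<0$ everywhere because $f>0$. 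In particular the negativity hypothesis of Theorem~\ref{thm:r3} is automatically satisfied, and equation \eqref{cond:K} holds on all of $M$.

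Next I would set $c=0$ in \eqref{cond:K}. Using $(c-K)^2=K^2$, the identity $(c-K)\Delta K-|\nabla K|^2-(8/3)K(c-K)^2=0$ becomes $-K\Delta K-|\nabla K|^2-(8/3)K^3=0$, which after multiplying by $-1$ is exactly \eqref{cond:Kr3}. Thus $\left(M^2,g\right)$ is a Riemannian surface with negative Gaussian curvature satisfying \eqref{cond:Kr3}, and Theorem~\ref{thm:r3} applies verbatim to conclude that $\left(M^2,(-K)^{1/2}g\right)$ is a Ricci surface in $\mathbb{R}^3$, as claimed.

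I do not expect a real obstacle here; the only point deserving care is the passage from the a priori local conclusion of Theorem~\ref{thm:CMOP} (stated on an open subset $U$) to a statement valid on all of $M$. This is precisely where the hypothesis $\nabla f\neq 0$ everywhere enters, since it is what lets the algebraic identification $\det A=-3f^2/4$ and the derivation of \eqref{cond:K} be carried out at every point. Alternatively, one could argue locally and then invoke that being a Ricci surface (flatness of $(c-K)^{1/2}g$) is a local condition together with the connectedness of $M$, but taking $U=M$ from the outset keeps the argument self-contained.
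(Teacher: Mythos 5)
Your proof is correct and follows exactly the route the paper intends: the corollary is stated there as an immediate consequence of Theorems \ref{thm:CMOP} and \ref{thm:r3}, with no further argument given. Your careful verification that $c=0$ turns \eqref{cond:K} into \eqref{cond:Kr3}, that $K=-3f^2/4<0$ everywhere, and that the hypotheses let one take $U=M$ is precisely the (unwritten) content of the paper's deduction.
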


\begin{remark} In the same way as in Theorem \ref{thm:r3} one can show that if $\left(M^2,g\right)$ is a Ricci surface in $\mathbb{R}^3$ with Gaussian curvature $K$, then the Gaussian curvature of $\left(M^2,(-K)^{-1}g\right)$ satisfies equation \eqref{cond:Kr3}.
\end{remark}

Although the method used to prove Theorem \ref{thm:r3} does not work in the case of non-flat space forms, it is still possible to extend this result to the case of space forms, as shown by the following theorem.

\begin{theorem}\label{thm:Nc} Let $\left(M^2,g\right)$ be a biconservative surface in a space form $N^3(c)$ with induced metric $g$ and Gaussian curvature $K$. If $f(p)>0$ and $(\nabla f)(p)\neq 0$ at any point $p\in M$, where $f$ is the mean curvature function, then, on an open dense set, $\left(M^2,(c-K)^rg\right)$ is a Ricci surface in $N^3(c)$, where $r$ is a locally defined function that satisfies
$$
K+\Delta\left(\frac{1}{4}\log(c-K_r)+\frac{r}{2}\log(c-K)\right)=0,
$$
with the Gaussian curvature $K_r$ of $(c-K)^rg$ given by
$$
K_r=(c-K)^{-r}\left(\frac{3-4r}{3}K+\frac{1}{2}\log(c-K)\Delta r+(c-K)^{-1}g(\nabla r,\nabla K)\right).
$$
\end{theorem}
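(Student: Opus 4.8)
The plan is to reduce the statement, via Proposition~\ref{p:Ricci_original}, to producing near each point a conformal factor making the metric satisfy the Ricci condition for $N^3(c)$, and then to carry out the conformal bookkeeping. First I would fix the setting: by Theorem~\ref{thm:CMOP} and the hypotheses $f>0$, $\nabla f\neq 0$ --- which, via \eqref{eq:Gauss}, give $K=c-3f^2/4<c$ with $\nabla K=-\tfrac{3f}{2}\nabla f$ nowhere vanishing --- equations \eqref{eq:Gauss} and \eqref{cond:K} hold on an open dense subset, and since $K$ is non-constant there I would further restrict to the open dense set $W$ where in addition $\log(c-K)\neq 0$ (i.e.\ $f\neq 2/\sqrt3$); this accounts for the ``open dense set'' in the statement. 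On $W$, Proposition~\ref{p:Ricci}(ii) reads $\Delta\log(c-K)=-\tfrac83 K$, which I use throughout.

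Next, for a smooth function $r$ on $W$, write $g_r=(c-K)^rg=e^{2\varphi}g$ with $\varphi=\tfrac r2\log(c-K)$. Applying \eqref{eq:confK} together with the product rule $\Delta(uv)=u\Delta v+v\Delta u-2g(\nabla u,\nabla v)$ to $\Delta\varphi$, the relation $\nabla\log(c-K)=-(c-K)^{-1}\nabla K$, and $\Delta\log(c-K)=-\tfrac83 K$, a direct computation of $K_r=e^{-2\varphi}(K+\Delta\varphi)$ yields the displayed formula for $K_r$. For the Ricci condition I would use part~(iii) of Proposition~\ref{p:Ricci_original}: $(W,g_r)$ is a Ricci surface in $N^3(c)$ if and only if $c-K_r>0$ and $(c-K_r)^{1/2}g_r$ is flat. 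Writing $(c-K_r)^{1/2}g_r=e^{2\rho}g_r$ with $\rho=\tfrac14\log(c-K_r)$, formula \eqref{eq:confK} makes flatness equivalent to $K_r+\Delta_{g_r}\rho=0$; using $\Delta_{g_r}=(c-K)^{-r}\Delta$ and $(c-K)^rK_r=K+\Delta\varphi$ and multiplying through by $(c-K)^r$, this becomes exactly $K+\Delta\big(\tfrac14\log(c-K_r)+\tfrac r2\log(c-K)\big)=0$. At this point both displayed identities of the theorem are established, and the task is reduced to solving this last equation for $r$ near each point of $W$.

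This existence is the main point. I would first solve the Poisson equation $\Delta\sigma=-K$ locally --- always possible, with $\sigma$ real-analytic since the surface, hence $K$, is real-analytic. Since $K+\Delta(\,\cdot\,)=0$ merely says $(\,\cdot\,)-\sigma$ is harmonic, and adding a harmonic function to $\sigma$ preserves $\Delta\sigma=-K$, it suffices to find $r$ with $\tfrac14\log(c-K_r)+\tfrac r2\log(c-K)=\sigma$; for such an $r$ one checks that $(c-K_r)^{1/2}g_r=e^{2\sigma}g$, which is flat by the choice of $\sigma$, and that $c-K_r=e^{4\sigma}(c-K)^{-2r}>0$, so $(W,g_r)$ is indeed a Ricci surface in $N^3(c)$. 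The equation $\tfrac14\log(c-K_r)+\tfrac r2\log(c-K)=\sigma$ reads $c-K_r=e^{4\sigma}(c-K)^{-2r}$; substituting the formula for $K_r$ (whose right-hand side is, by \eqref{eq:confK}, the Gaussian curvature of $g_r$) and clearing $(c-K)^r$ turns it into
$$
\tfrac12\log(c-K)\,\Delta r+(c-K)^{-1}g(\nabla r,\nabla K)-\tfrac43 Kr=c(c-K)^r-e^{4\sigma}(c-K)^{-r}-K,
$$
a second-order semilinear equation for $r$, elliptic precisely because its principal coefficient $\tfrac12\log(c-K)$ is nonzero on $W$. Since all the data are real-analytic, local solvability follows from the Cauchy--Kovalevskaya theorem (every curve being non-characteristic, as the equation is elliptic); alternatively, using the explicit local form of non-CMC biconservative surfaces from \cite{CMOP,Fu}, one may look for $r$ as a function of a single adapted coordinate and reduce to an ODE, solvable by Picard--Lindel\"of. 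Doing this near every point of $W$ proves the theorem. The crux is this last step: the nonlinearity of the equation for $r$ is what forces $r$ to be a genuine function rather than a constant (contrast Theorem~\ref{thm:r3}), and it is also why the conclusion is confined to the open dense set $W$.
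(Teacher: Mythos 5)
Your derivation of the two displayed identities is exactly the paper's: the same conformal bookkeeping via \eqref{eq:confLaplacian}--\eqref{eq:confK}, the product rule for $\Delta(r\log(c-K))$, and $\Delta\log(c-K)=-\tfrac83K$ from Proposition \ref{p:Ricci}(ii), with the same open dense set $\{\log(c-K)\neq 0\}$. Where you genuinely diverge is the existence step for $r$. The paper attacks $K+\Delta\varphi=0$ directly: since $\varphi$ contains $\log(c-K_r)$ and $K_r$ contains $\Delta r$, this is a \emph{fourth-order} equation $\Delta^2r=F(r,X_1r,X_1(X_1r),X_1(X_1(X_1r)))$; imposing the ansatz $X_2r=0$ and using the adapted parametrization $X(u,s)$, it becomes $r^{(iv)}(u)=\tilde F(u,r,r',r'',r''')$, solved by ODE theory with initial data chosen so that $c-K_r>0$ at the base point. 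You instead split off a potential $\sigma$ with $\Delta\sigma=-K$ and solve $\varphi=\sigma$, which is only \emph{second order} in $r$ and, as you note, forces $c-K_r=e^{4\sigma}(c-K)^{-2r}>0$ automatically rather than via a choice of initial conditions --- a real simplification. Your main tool (Cauchy--Kovalevskaya) requires real-analyticity of the metric and of $K$, which you assert rather quickly; it is true (by the explicit local classification in \cite{CMOP,Fu}, or intrinsically because $K$ satisfies an analytic second-order ODE along the $\nabla K$-direction and determines the metric in adapted coordinates), but a careful write-up should justify it. Your fallback route --- choosing $\sigma=\sigma(u)$ and $r=r(u)$ so that the equation becomes a second-order ODE with leading coefficient $-\tfrac12\log(c-K)\neq 0$ --- sidesteps analyticity entirely and is essentially the paper's reduction performed two orders lower; either version of your argument is sound.
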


\begin{proof} Let $A$ be the shape operator of $M^2$ in $N^3(c)$ and then $f=\trace A$ is the mean curvature function. Working as in \cite{CMOP}, we define on $M$ a global orthonormal frame field $\{X_1,X_2\}$, where $X_1=(\nabla f)/|\nabla f|$.

In \cite{CMOP} it is proved that $X_2f=0$, which implies, using Theorem \ref{thm:CMOP}, that also $X_2K=0$. In the same paper it is shown that
$$
AX_1=-\frac{f}{2}X_1,\quad AX_2=\frac{3f}{2}X_2
$$
and
\begin{equation}\label{eq:nabla}
\nabla_{X_1}X_1=\nabla_{X_1}X_2=0,\quad\nabla_{X_2}X_1=-\frac{3X_1f}{4f}X_2,\quad\nabla_{X_2}X_2=\frac{3X_1f}{4f}X_1,
\end{equation}
where $\nabla$ is the induced connection on $M$.

Now, let us consider a family of Riemannian metrics $g_r=(c-K)^rg$ on $M$, this time $r$ being a function on $M$ such that $X_2r=0$. From the above formulas for the Levi-Civita connection $\nabla$, it easily follows that $[X_1,X_2](K)=0$ and $[X_1,X_2](r)=0$. Therefore, we also have $X_2(X_1K)=0$ and $X_2(X_1r)=0$.

\noindent From \eqref{eq:confK}, we have that the Gaussian curvature $K_r$ of $g_r$ is given by
$$
K_r=(c-K)^{-r}\left(K+\frac{1}{2}\Delta(r\log(c-K))\right),
$$ 
where $K$ is the Gaussian curvature of $g$, and, since, after a straightforward computation, also using Theorem \ref{thm:CMOP} and Proposition \ref{p:Ricci}, we have that
\begin{align}\label{eq:delta_inter}
\Delta(r\log(c-K))=&\sum_{i=1}^2\{-X_i(X_i(r\log(c-K)))+\nabla_{X_i}X_i(r\log(c-K))\}\\\nonumber
=&r\Delta\log(c-K)+\log(c-K)\Delta r-2g(\nabla r,\nabla\log(c-K))\\\nonumber
=&-\frac{8}{3}rK+\log(c-K)\Delta r+2(c-K)^{-1}g(\nabla r,\nabla K),
\end{align}
it follows that
\begin{equation}\label{eq:Kr}
K_r=(c-K)^{-r}\left(\frac{3-4r}{3}K+\frac{1}{2}\log(c-K)\Delta r+(c-K)^{-1}g(\nabla r,\nabla K)\right).
\end{equation}

Next, assume that $(c-K_r)(p)>0$ at any point $p\in M$ and consider a new Riemannian metric $\bar g$ on $M$ given by 
\begin{align*}
\bar g=&(c-K_r)^{1/2}g_r=(c-K_r)^{1/2}(c-K)^rg\\
=&e^{2\varphi}g.
\end{align*}
We ask the corresponding Gaussian curvature $\bar K$ to vanish. From the definition of $\bar g$, one obtains 
\begin{equation}\label{eq:varphi}
\varphi=\frac{1}{4}\log(c-K_r)+\frac{r}{2}\log(c-K).
\end{equation}
Equation \eqref{eq:confK} implies that
$$
\bar K=(c-K_r)^{-1/2}(c-K)^{-r}(K+\Delta\varphi)
$$
and then $\bar K=0$ becomes 
\begin{equation}\label{eq:Kvanishes}
K+\Delta\varphi=0.
\end{equation}

Using \eqref{eq:varphi}, \eqref{eq:delta_inter}, and \eqref{eq:Kr} we get
\begin{align*}
\Delta\varphi=&\frac{1}{4}\Delta\log(c-K_r)+\frac{1}{2}\Delta(r\log(c-K))\\
=&\frac{1}{4}(c-K_r)^{-2}((K_r-c)\Delta K_r+|\nabla K_r|^2)-\frac{4}{3}rK+\frac{1}{2}\log(c-K)\Delta r\\
&+(c-K)^{-1}g(\nabla r,\nabla K)\\
=&\frac{1}{4}(c-K_r)^{-2}\bigg\{(K_r-c)\Delta\bigg((c-K)^{-r}\bigg(\frac{3-4r}{3}K+\frac{1}{2}\log(c-K)\Delta r\\&+(c-K)^{-1}g(\nabla r,\nabla K)\bigg)\bigg)\\
&+\bigg|\nabla\bigg((c-K)^{-r}\bigg(\frac{3-4r}{3}K+\frac{1}{2}\log(c-K)\Delta r+(c-K)^{-1}g(\nabla r,\nabla K)\bigg)\bigg)\bigg|^2\bigg\}\\
&-\frac{4}{3}rK+\frac{1}{2}\log(c-K)\Delta r+(c-K)^{-1}g(\nabla r,\nabla K).
\end{align*}

We note that, since $\nabla K\neq 0$ at any point, the function $\log(c-K)$ cannot vanish on an open subset of $M$. Now, away from the points where $\log(c-K)=0$, using the above equation, \eqref{eq:nabla}, and \eqref{eq:Gauss}, equation \eqref{eq:Kvanishes} can be written as
\begin{equation}\label{eq:Delta}
\Delta^2r=F(r,X_1r,X_1(X_1r),X_1(X_1(X_1r))),
\end{equation}
where the coefficients in the expression $F$ in the right hand side are smooth functions on $M$ depending on $K$, $X_1(K)$, $X_1(X_1K)$, and $X_1(X_1(X_1K))$. 

Let us consider a point $p_0\in M$ and $\gamma=\gamma(u)$ an integral curve of $X_1$ with $\gamma(0)=p_0$. Let $\phi$ be the flow of $X_2$ and, in a neighborhood $U\subset M$ of $p_0$, define a local parametrization of $M$,
$$
X(u,s)=\phi_{\gamma(u)}(s)=\phi(\gamma(u),s).
$$
We have $X(u,0)=\phi_{\gamma(u)}(0)=\gamma(u)$,
$$
X_u(u,0)=\partial_u(u,0)=\gamma'(u)=X_1(\gamma(u))=X_1(u,0),
$$
and
$$
X_s(u,s)=\partial_s(u,s)=\phi'_{\gamma(u)}(s)=X_2(\phi_{\gamma(u)}(s))=X_2(u,s),
$$
for any $u$ and $s$. 

By hypothesis, we have $X_2r=0$, which means that $r=r(u)$ on $U$. Moreover, since $X_2(X_1r)=0$, it follows that $(X_1r)(u,s)=(X_1r)(u,0)=r'(u)$ on $U$. From the formulas of the Levi-Civita connection, it is easy to see that also $X_2(X_1(X_1r))=0$, $X_2(X_1(X_1(X_1r)))=0$, and $X_2(X_1(X_1(X_1(X_1r))))=0$, that implies $X_1(X_1r)=r''(u)$, $X_1(X_1(X_1r))=r'''(u)$, and $X_1(X_1(X_1(X_1r)))=r^{(iv)}(u)$, respectively. Moreover, the same formulas hold if we take $K$ instead of $r$. Therefore, on $U$, equation \eqref{eq:Delta} becomes
\begin{equation}\label{eq:Delta_u}
r^{(iv)}(u)=\tilde F(u,r(u),r'(u),r''(u),r'''(u)).
\end{equation}
The initial conditions follow from $(c-K_r)(p_0)>0$, i.e., from 
\begin{align*}
(c-K(0))^{-r(0)}\bigg(\frac{3-4r(0)}{3}K(0)+\frac{1}{2}\log(c-K(0))\bigg(-r''(0)+\frac{3f'(0)}{4f(0)}r'(0)\bigg)\\+(c-K(0))^{-1}r'(0)K'(0)\bigg)<c.
\end{align*}
It is easy to see that we can choose $r(0)$, $r'(0)$, and $r''(0)$ such that the above inequality is satisfied.

Now, from the ODE's theory, we know that equation \eqref{eq:Delta_u}, with the given initial conditions, has a unique solution, which means that there exists a flat Riemannian metric $\bar g=(c-K_r)^{1/2}g_r$ on $U$, and then we use \cite[Theorem~8]{L} to conclude that our surface $(M^2,g)$ can be locally conformally embedded in $N^3(c)$ as a minimal surface.
\end{proof}

\begin{remark} It is straightforward to verify that, when $c=0$, the only constant solution of equation \eqref{eq:Delta} is $r=1/2$.
\end{remark}

\section{An intrinsic characterization of biconservative surfaces in space forms}

While any of the equivalent conditions in Proposition \ref{p:Ricci_original} characterizes intrinsically minimal surfaces in 3-dimensional space forms $N^3(c)$ (see \cite[Theorem~8]{L}), the similar conditions in Proposition \ref{p:Ricci} alone fail to do the same in the case of biconservative surfaces. In this section, we will find the intrinsic necessary and sufficient conditions for a Riemannian surface to be locally embedded in $N^3(c)$ as a non-CMC biconservative surface.

We will first need the following theorem.

\begin{theorem}\label{thm:metric}
Let $\left(M^2,g\right)$ be a Riemannian surface with Gaussian curvature $K$ satisfying $(\nabla K)(p)\neq 0$ and $c-K(p)>0$ at any point $p\in M$, where $c\in\mathbb{R}$ is a constant. Let $X_1=(\nabla K)/|\nabla K|$ and $X_2\in C(TM)$ be two vector fields on $M$ such that $\{X_1(p),X_2(p)\}$ is a positively oriented orthonormal basis at any point $p\in M$. If level curves of $K$ are circles in $M$ with constant curvature 
$$
\kappa=\frac{3X_1K}{8(c-K)}=\frac{3|\nabla K|}{8(c-K)},
$$
then, for any point $p_0\in M$, there exists a parametrization $X=X(u,s)$ of $M$ in a neighborhood $U\subset M$ of $p_0$ positively oriented such that
\begin{itemize}
\item[(a)] the curve $u\to X(u,0)$ is an integral curve of $X_1$ with $X(0,0)=p_0$ and $s\to X(u,s)$ is an integral curve of $X_2$, for any $u$$;$

\item[(b)] $K(u,s)=(K\circ X)(u,s)=(K\circ X)(u,0)=K(u)$, for any $(u,s)$;

\item[(c)] for any pair $(u,s)$, we have
$$
g_{11}(u,s)=\frac{9}{64}\left(\frac{K'(u)}{c-K(u)}\right)^2s^2+1,\quad g_{12}(u,s)=-\frac{3K'(u)}{8(c-K(u))}s,\quad g_{22}(u,s)=1;
$$

\item[(d)] the Gaussian curvature $K=K(u)$ satisfies
$$
24(c-K)K''+33(K')^2+64K(c-K)^2=0;
$$

\item[(e)] $X_1=X_u-g_{12}X_s$, $X_2=X_s$, the Levi-Civita connection on $\left(M^2,g\right)$ is given by
$$
\nabla_{X_1}X_1=\nabla_{X_1}X_2=0,\quad \nabla_{X_2}X_2=-\frac{3X_1K}{8(c-K)}X_1,\quad \nabla_{X_2}X_1=\frac{3X_1K}{8(c-K)}X_2,
$$
and, therefore, the integral curves of $X_1$ are geodesics.
\end{itemize}
\end{theorem}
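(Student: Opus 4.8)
The plan is to extract all the data in (a)--(e) from the covariant derivatives of the frame $\{X_1,X_2\}$, which the hypothesis essentially prescribes, and then to build the parametrization from the flows of $X_1$ and $X_2$. Since $X_1=\nabla K/|\nabla K|$, we have $X_2K=\langle\nabla K,X_2\rangle=0$, so $K$ is constant along the integral curves of $X_2$; hence these curves are precisely the level curves of $K$. Put $\lambda=|\nabla K|=X_1K>0$. The hypothesis says, first, that each level curve has constant geodesic curvature; since $c-K$ is constant along such a curve as well, $\lambda$ is constant along it, i.e.\ $X_2\lambda=0$. Secondly, that constant curvature equals $\kappa=\frac{3\lambda}{8(c-K)}$, which, with respect to the orientation and the unit normal $X_1$, means $\nabla_{X_2}X_2=-\kappa X_1$; orthonormality of $\{X_1,X_2\}$ then forces $\nabla_{X_2}X_1=\kappa X_2$. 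Now I would use the symmetry of $\Hess K$: on one hand $\Hess K(X_1,X_2)=X_1(X_2K)-(\nabla_{X_1}X_2)K=-(\nabla_{X_1}X_2)K$, on the other $\Hess K(X_2,X_1)=X_2(X_1K)-(\nabla_{X_2}X_1)K=X_2\lambda-\kappa X_2K=0$; so $(\nabla_{X_1}X_2)K=0$. Since the $X_2$-component of $\nabla_{X_1}X_2$ vanishes because $|X_2|=1$, and $X_1K=\lambda>0$, this gives $\nabla_{X_1}X_2=0$ and hence, again by orthonormality, $\nabla_{X_1}X_1=0$. This proves the connection formulas in (e); in particular the integral curves of $X_1$ are geodesics.

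Next I would fix an integral curve $\gamma$ of $X_1$ with $\gamma(0)=p_0$, let $\phi$ be the local flow of $X_2$, and set $X(u,s)=\phi(\gamma(u),s)$, so that $X_s=X_2$ everywhere and $X_u(u,0)=\gamma'(u)=X_1$. As $\{X_1,X_2\}$ is a positively oriented orthonormal frame, $X$ is a positively oriented parametrization of a neighborhood $U$ of $p_0$, which is (a), while $\partial_s(K\circ X)=dK(X_2)=0$ gives (b). Write $X_u=\alpha X_1+g_{12}X_2$, so that $g_{11}=\alpha^2+g_{12}^2$, $g_{22}=g(X_2,X_2)=1$, $\alpha(u,0)=1$, and $g_{12}(u,0)=0$. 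Using $[X_u,X_s]=0$, the formulas in (e), and $X_s=X_2$, a short computation gives $\partial_s\alpha=0$ — so $\alpha\equiv1$ and $X_1=X_u-g_{12}X_s$ — and $\partial_s g_{12}=-\kappa$. Moreover $X_2\kappa=0$ (because $X_2K=0$ and $X_2\lambda=0$), so $\kappa$ depends on $u$ only, and evaluating at $s=0$ gives $K'(u)=dK(X_1)=\lambda$, whence $\kappa=\frac{3K'(u)}{8(c-K(u))}$. Integrating in $s$ from the initial values yields $g_{12}=-\frac{3K'(u)}{8(c-K(u))}\,s$ and $g_{11}=\frac{9}{64}\left(\frac{K'(u)}{c-K(u)}\right)^2s^2+1$, which together with $g_{22}=1$ is exactly (c).

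It remains to obtain (d). I would compute the Gaussian curvature directly from (e): with $[X_1,X_2]=-\kappa X_2$, one finds $R(X_1,X_2)X_2=\nabla_{X_1}\nabla_{X_2}X_2-\nabla_{X_2}\nabla_{X_1}X_2-\nabla_{[X_1,X_2]}X_2=-(X_1\kappa+\kappa^2)X_1$, so that $K=\langle R(X_1,X_2)X_2,X_1\rangle=-(X_1\kappa+\kappa^2)$. Since $\kappa$ and $K$ depend only on $u$ and $X_1=X_u-g_{12}X_s$, we have $X_1\kappa=\kappa'(u)$, hence $\kappa'+\kappa^2=-K$; substituting $\kappa=\frac{3K'}{8(c-K)}$ and clearing denominators gives precisely $24(c-K)K''+33(K')^2+64K(c-K)^2=0$.

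The step I expect to be the main obstacle is showing that the integral curves of $X_1$ are geodesics, i.e.\ $\nabla_{X_1}X_2=0$: this is where the ``circle'' hypothesis enters in an essential way, through the constancy of $\lambda=|\nabla K|$ along the level curves together with the symmetry of $\Hess K$. Once the covariant derivatives of $\{X_1,X_2\}$ are pinned down, everything else is bookkeeping in the flow coordinates, and (d) becomes a one-line substitution into the formula $K=-(X_1\kappa+\kappa^2)$. A smaller point requiring care is the sign in $\nabla_{X_2}X_2=-\kappa X_1$, which is fixed by the orientation and the choice of normal $X_1=\nabla K/|\nabla K|$ and is consistent with the structure equations \eqref{eq:nabla} occurring for biconservative surfaces.
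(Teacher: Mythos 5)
Your proof is correct, and while it shares the paper's overall skeleton, the two key computations are carried out by genuinely different means. Both arguments build the same parametrization $X(u,s)=\phi_{\gamma(u)}(s)$ from an integral curve of $X_1$ and the flow of $X_2$, and both ultimately reduce the determination of the metric to $\partial_s g_{12}=-\kappa$ with $g_{12}(u,0)=0$; but the logical order is reversed. The paper first pins down the metric in coordinates: it writes $X_1=\sigma^{-1}(\partial_u-g_{12}\partial_s)$, deduces $\sigma\equiv 1$ from the fact that $X_1K=\tfrac{8}{3}\kappa(c-K)$ depends only on $u$, extracts $\partial_s g_{12}=-\kappa$ from the Christoffel symbol $\Gamma_{22}^1$ together with the circle condition, obtains (d) from the coordinate (Brioschi-type) Gauss formula, and only at the end reads off the connection formulas (e) from the Christoffel symbols of the explicit metric. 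You instead derive (e) first, straight from the hypothesis: $\nabla_{X_2}X_2=-\kappa X_1$ and $\nabla_{X_2}X_1=\kappa X_2$ from the signed curvature and orthonormality, and $\nabla_{X_1}X_2=\nabla_{X_1}X_1=0$ from the symmetry of $\Hess K$ combined with $X_2(X_1K)=0$ --- which is exactly where the constancy of the curvature along each level curve enters, just as in the paper's deduction that $\sigma$ depends only on $u$. You then get $\alpha\equiv 1$ and $\partial_s g_{12}=-\alpha\kappa$ from $[X_u,X_s]=0$, and (d) from the frame identity $K=-(X_1\kappa+\kappa^2)$, which I have checked does expand to $24(c-K)K''+33(K')^2+64K(c-K)^2=0$. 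Your route replaces the paper's two coordinate computations (the Gauss formula for (d) and the Christoffel symbols for (e)) with shorter invariant ones, and makes explicit that (e) depends only on the hypothesis rather than on the already-computed metric; the paper's version stays entirely inside one coordinate chart and needs nothing beyond Christoffel symbols. Both are complete proofs of the statement.
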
 

\begin{proof} Let $p_0$ be a fixed point in $M$, $\gamma=\gamma(u)$ an integral curve of $X_1$ with $\gamma(0)=p_0$, and $\phi$ the flow of $X_2$. Consider again
$$
X(u,s)=\phi_{\gamma(u)}(s)=\phi(\gamma(u),s).
$$
As we have already seen, we have $X(u,0)=\phi_{\gamma(u)}(0)=\gamma(u)$,
$$
X_u(u,0)=\partial_u(u,0)=\gamma'(u)=X_1(\gamma(u))=X_1(u,0),
$$
and
$$
X_s(u,s)=\partial_s(u,s)=\phi'_{\gamma(u)}(s)=X_2(\phi_{\gamma(u)}(s))=X_2(u,s),
$$
for any $u$ and $s$.

Since $X_s(u,s)=X_2(u,s)$, it follows that $|X_s(u,s)|^2=1$, which means that 
\begin{equation}\label{eq:g1}
g_{22}(u,s)=1.
\end{equation}

We also have, for any $u$,
\begin{equation}\label{eq:g2}
g_{11}(u,0)=|X_u(u,0)|^2=1,\quad g_{12}(u,0)=g(X_u(u,0),X_s(u,0))=0.
\end{equation}

We will now find the expression of $X_1$ with respect to $X_u=\partial_u$ and $X_s=\partial_s$. We write $X_1=a\partial_u+b\partial_s$, where $a$ and $b$ are smooth functions. Using \eqref{eq:g1}, it follows that
$$
1=g(X_1,X_1)=a^2g_{11}+2abg_{12}+b^2g_{22}=a^2g_{11}+2abg_{12}+b^2,
$$
and
$$
0=g(X_1,X_2)=ag_{12}+bg_{22}=ag_{12}+b.
$$
From the second equation, one obtains $b=-ag_{12}$ and, replacing in the first one, we get $
1=a^2(g_{11}-g_{12}^2)$.
Let us denote $\sigma(u,s)=\sqrt{g_{11}-g_{12}^2}>0$ and then we have 
\begin{equation}\label{eq:x1}
X_1=\frac{1}{\sigma}\partial_u-\frac{g_{12}}{\sigma}\partial_s.
\end{equation}

Next, we note that, from the definition of $X_1$ and $X_2$, one obtains $X_2K=0$, i.e., the integral curves $s\to\phi_{\gamma(u)}(s)$ of $X_2$ are the level curves of $K$, that means that $s\to K(\phi_{\gamma(u)}(s))$ is a constant function. Also, identifying $K$ with $K\circ X$, we can write $K=K(u,s)$. Since $X_2K=0$, it follows that actually $K(u,s)=K(u,0)=K(u)$, for any pair $(u,s)$. The level curves $s\to\phi_{\gamma(u)}(s)$ of $K$ are parametrized by arc length and, by hypothesis, are circles with constant curvature $\kappa=3X_1K/(8(c-K))$, which means, also using \eqref{eq:x1} and the fact that $\kappa(u,s)=\kappa(u)$, that
\begin{align}\label{eq:inter_K}
X_1K=&\frac{8}{3}\kappa(c-K)=\frac{8}{3}\kappa(u)(c-K(u))\\\nonumber
=&\frac{1}{\sigma}K'=\frac{1}{\sigma(u,s)}K'(u),
\end{align}
which implies that $X_2(X_1K)=0$ and $\sigma(u,s)=\sigma(u)=1$, for any $u$ and $s$.

Let us consider a fixed $u$. As $\{X_2,-X_1\}$ is positively oriented, we have
\begin{align*}
\nabla_{\phi'_{\gamma(u)}(s)}\phi'_{\gamma(u)}(s)=&\nabla_{X_2}X_2=\kappa(-X_1)\\
=&\Gamma_{22}^1\partial_u+\Gamma_{22}^2\partial_s
\end{align*}
and then 
\begin{align*}
\kappa=&g\left(\nabla_{\phi'_{\gamma(u)}(s)}\phi'_{\gamma(u)}(s),-X_1\right)=g\left(\Gamma_{22}^1\partial_u+\Gamma_{22}^2\partial_s,-\partial_u+g_{12}\partial_s\right)\\
=&-\Gamma_{22}^1\left(g_{11}-g_{12}^2\right)=-\Gamma_{22}^1,
\end{align*}
where $\Gamma_{ij}^k$ are the Christoffel symbols.

Since, by the definition of $\Gamma_{22}^1$, using $1=\sigma^2=g_{11}-g_{12}^2$, we have
\begin{align*}
\Gamma_{22}^1=&\frac{1}{2}g^{11}\left(\frac{\partial g_{21}}{\partial s}+\frac{\partial g_{21}}{\partial s}-\frac{\partial g_{22}}{\partial u}\right)+\frac{1}{2}g^{12}\left(\frac{\partial g_{22}}{\partial s}+\frac{\partial g_{22}}{\partial s}-\frac{\partial g_{22}}{\partial s}\right)\\
=&g^{11}\frac{\partial g_{12}}{\partial s}=\frac{\partial g_{12}}{\partial s},
\end{align*}
one obtains $\kappa=-\partial g_{12}/\partial s$. From equation \eqref{eq:inter_K}, it follows that
$$
K'(u)=-\frac{8}{3}\frac{\partial g_{12}}{\partial s}(c-K(u)),
$$
which leads to
$$
\frac{\partial g_{12}}{\partial s}=-\frac{3K'(u)}{8(c-K(u))}=\frac{3}{8}(\log(c-K(u)))'
$$
and, therefore,
$$
g_{12}(u,s)=-\frac{3K'(u)}{8(c-K(u))}s+\alpha(u).
$$
But, from \eqref{eq:g2}, we know that $g_{12}(u,0)=0$, which implies that $\alpha(u)=0$, and we conclude that
\begin{equation}\label{eq:g3}
g_{12}(u,s)=-\frac{3K'(u)}{8(c-K(u))}s.
\end{equation}

Finally, since $1=\sigma^2=g_{11}-g_{12}^2$, we find
\begin{equation}\label{eq:g4}
g_{11}(u,s)=\frac{9}{64}\left(\frac{K'(u)}{c-K(u)}\right)^2s^2+1.
\end{equation}

Now, from the definition of Christoffel symbols and \eqref{eq:g1}, \eqref{eq:g3}, and \eqref{eq:g4}, we obtain, after a straightforward computation, that the Gauss equation of $(M^2,g)$
$$
K=-\frac{1}{g_{11}}\left\{(\Gamma_{12}^2)_u-(\Gamma_{11}^2)_s+\Gamma_{12}^1\Gamma_{11}^2+\Gamma_{12}^2\Gamma_{12}^2-\Gamma_{11}^2\Gamma_{22}^2-\Gamma_{11}^1\Gamma_{12}^2\right\}
$$
is equivalent to
\begin{equation}\label{eq:preK}
24(c-K)K''+33(K')^2+64K(c-K)^2=0.
\end{equation}

Again using the definition of Christoffel symbols and equations \eqref{eq:g1}, \eqref{eq:g3}, \eqref{eq:g4}, the expressions of $X_1$ and $X_2$, and equation \eqref{eq:preK}, we get the formulas for the Levi-Civita connection as given by the last item of the theorem.
\end{proof}

\begin{remark} It is easy to verify that, in the hypotheses of Theorem \ref{thm:metric}, equation
$$
24(c-K)K''+33(K')^2+64K(c-K)^2=0
$$
can be written as
$$
(c-K)\Delta K-|\nabla K|^2-\frac{8}{3}K(c-K)^2=0.
$$
\end{remark}

\begin{remark} Considering a change of coordinates $(u,s)\to\left(u,(c-K)^{3/8}s\right)=(u,v)$ in Theorem \ref{thm:metric}, we obtain, after a straightforward computation, a simpler expression 
$$
g=du^2+(c-K)^{-\frac{3}{4}}dv^2
$$
for the Riemannian metric on the surface. Moreover, if we consider a second change of coordinates $(u,v)\to\left(\int_{u_0}^u(c-K)^{3/8}du,v\right)=(\tilde u,\tilde v)$, then the metric $g$ can be written as 
$$
g=(c-K)^{-\frac{3}{4}}\left(d\tilde u^2+d\tilde v^2\right),
$$
where $K(\tilde u)=K(u(\tilde u))$, which means that $(\tilde u,\tilde v)$ are isothermal coordinates on the surface.
\end{remark}

The converse of Theorem \ref{thm:metric} is the following result, that can be proved by a straightforward computation.

\begin{theorem} Let $M^2$ be a surface and $c\in\mathbb{R}$ a constant. Consider a fixed point $p_0\in M$, a parametrization $X=X(u,s)$ of $M$ on a neighborhood $U\subset M$ of $p_0$ positively oriented, and $K=K(u)$ a function on $M$ such that $K'(u)>0$ and $c-K(u)>0$, for any $u$, and 
$$
24(c-K)K''+33(K')^2+64K(c-K)^2=0.
$$
Define a Riemannian metric $g=g_{11}du^2+2g_{12}duds+g_{22}ds^2$ on $U$ by
$$
g_{11}(u,s)=\frac{9}{64}\left(\frac{K'(u)}{c-K(u)}\right)^2s^2+1,\quad g_{12}(u,s)=-\frac{3K'(u)}{8(c-K(u))}s,\quad g_{22}(u,s)=1.
$$ 
Then $K$ is the Gaussian curvature of $g$ and its level curves, i.e., the curves $s\to X(u,s)$, are circles in $M$ with curvature $\kappa=3K'(u)/(8(c-K(u)))$.
\end{theorem}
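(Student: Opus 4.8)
The plan is to run the computation in the proof of Theorem~\ref{thm:metric} backwards: from the prescribed metric coefficients one reads off the Christoffel symbols and then checks directly (i) that the curves $s\mapsto X(u,s)$ are unit-speed circles with the claimed curvature, and (ii) that the Gauss equation yields Gaussian curvature $K(u)$, the hypothesis $24(c-K)K''+33(K')^2+64K(c-K)^2=0$ being precisely what makes (ii) hold. To set up, put $h(u)=3K'(u)/(8(c-K(u)))=-\tfrac38\bigl(\log(c-K)\bigr)'$, so that $g_{12}=-hs$, $g_{11}=h^2s^2+1$, $g_{22}=1$. Then $\det g=g_{11}g_{22}-g_{12}^2=1$, so $g$ is positive definite with $g^{11}=1$, $g^{12}=hs$, $g^{22}=h^2s^2+1$; moreover $g_{11}-g_{12}^2=1$, so $X_2:=\partial_s$ and $X_1:=\partial_u-g_{12}\partial_s=\partial_u+hs\,\partial_s$ form a (positively oriented) orthonormal frame. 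Since $K=K(u)$ depends only on $u$ we have $X_2K=0$, so each curve $s\mapsto X(u,s)$ is arc-length parametrized (because $g_{22}=1$) and these curves are exactly the level sets of $K$.

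For assertion (i), the definition of the Christoffel symbols together with $g^{11}=1$ gives $\Gamma_{22}^1=\partial_s g_{12}=-h$, just as in the proof of Theorem~\ref{thm:metric}. Hence, for the unit-speed curve $s\mapsto X(u,s)$,
\[
g\bigl(\nabla_{\partial_s}\partial_s,\,-X_1\bigr)=g\bigl(\Gamma_{22}^1\partial_u+\Gamma_{22}^2\partial_s,\,-\partial_u+g_{12}\partial_s\bigr)=-\Gamma_{22}^1\,(g_{11}-g_{12}^2)=-\Gamma_{22}^1=h,
\]
so its geodesic curvature equals $h=3K'(u)/(8(c-K(u)))$, which is a positive constant along the curve since $K=K(u)$ with $K'>0$ and $c-K>0$; thus the level curves of $K$ are circles with the asserted curvature.

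For assertion (ii), the cleanest route is the one indicated in the remark following Theorem~\ref{thm:metric}: the change of coordinates $(u,s)\mapsto(u,v)=(u,(c-K)^{3/8}s)$ is a diffeomorphism (as $(c-K)^{3/8}>0$) and puts the metric in the warped form $g=du^2+\varphi(u)^2\,dv^2$ with $\varphi(u)=(c-K(u))^{-3/8}$. For such a metric the Gaussian curvature is $-\varphi''/\varphi$, and from $\varphi'/\varphi=-\tfrac38\bigl(\log(c-K)\bigr)'=h$ one gets $\varphi''/\varphi=h'+h^2$. A short computation gives
\[
h'+h^2=\frac{24(c-K)K''+33(K')^2}{64(c-K)^2},
\]
so the hypothesis $24(c-K)K''+33(K')^2+64K(c-K)^2=0$ is exactly the identity $-\varphi''/\varphi=K$. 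Since Gaussian curvature is a diffeomorphism invariant, $K$ is the Gaussian curvature of $g$ in the original coordinates as well. (Alternatively, one inserts the $\Gamma^k_{ij}$ directly into the Gauss/Brioschi formula used in Theorem~\ref{thm:metric}; the explicitly $s$-dependent terms then cancel and the same ODE is left over.)

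The computation is entirely routine; the only point requiring care is the cancellation of all the explicitly $s$-dependent terms in the Gauss equation, which is guaranteed by, and in fact equivalent to, the prescribed differential equation for $K$. Passing to the coordinates $(u,v)$ makes this transparent, since the warped metric $du^2+\varphi(u)^2\,dv^2$ manifestly has curvature depending on $u$ alone.
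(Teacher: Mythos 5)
Your proof is correct and complete; all the key identities check out ($\det g=1$, $\Gamma_{22}^1=\partial_s g_{12}=-h$, the geodesic curvature $h=3K'/(8(c-K))$ with respect to the normal $-X_1$, and $h'+h^2=\bigl(24(c-K)K''+33(K')^2\bigr)/\bigl(64(c-K)^2\bigr)$). The paper offers no written proof of this converse, stating only that it follows "by a straightforward computation," which presumably means running the Brioschi/Gauss-equation computation of Theorem \ref{thm:metric} in reverse; your treatment of the level curves does exactly that. For the curvature assertion you instead pass to the coordinates $(u,v)=\bigl(u,(c-K)^{3/8}s\bigr)$ from the remark following Theorem \ref{thm:metric}, putting $g$ in the warped form $du^2+(c-K)^{-3/4}dv^2$ so that the curvature is $-\varphi''/\varphi$ with $\varphi=(c-K)^{-3/8}$; this sidesteps the cancellation of the $s$-dependent terms in the Brioschi formula and makes it transparent why the prescribed ODE for $K$ is exactly the condition that the Gaussian curvature equal $K$. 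Either route is fine; yours is the cleaner of the two and is fully consistent with the paper's conventions (orientation of $\{X_2,-X_1\}$, sign of $\kappa$).
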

 
We are now ready to prove the main result of this section, which provides an intrinsic characterization of non-CMC biconservative surfaces in a $3$-dimensional space form $N^3(c)$.

\begin{theorem}\label{thm:char} Let $(M^2,g)$ be a Riemannian surface and $c\in\mathbb{R}$ a constant. Then $M$ can be locally isometrically embedded in a space form $N^3(c)$ as a biconservative surface with positive mean curvature having the gradient different from zero at any point $p\in M$ if and only if the Gaussian curvature $K$ satisfies $c-K(p)>0$, $(\nabla K)(p)\neq 0$, and its level curves are circles in $M$ with curvature $\kappa=(3|\nabla K|)/(8(c-K))$.
\end{theorem}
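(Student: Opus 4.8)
The plan is to prove the two implications separately. The necessity follows from Theorem~\ref{thm:CMOP} and the structure equations of \cite{CMOP}, while for the sufficiency I build the shape operator by hand and invoke the fundamental existence theorem for surfaces in space forms (Bonnet's theorem), using Theorem~\ref{thm:metric} to supply the required geometric data.

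\emph{Necessity.} Suppose $\psi\colon M^2\to N^3(c)$ is a biconservative isometric embedding with $f>0$ and $\nabla f\neq 0$ at every point; then $M$ is non-CMC everywhere, so the formulas of \cite{CMOP} (recalled in the proof of Theorem~\ref{thm:Nc}) hold on all of $M$. From \eqref{eq:Gauss} we get $c-K=\tfrac{3f^2}{4}>0$ and $\nabla K=-\tfrac32 f\,\nabla f\neq 0$. Since $K$ is a strictly decreasing function of $f$, the level curves of $K$ coincide with the level curves of $f$, that is, with the integral curves of $X_2$ in the frame $\{X_1=\nabla f/|\nabla f|,\,X_2\}$. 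By \eqref{eq:nabla} these curves, parametrized by arclength, have geodesic curvature $|\nabla_{X_2}X_2|=\tfrac{3X_1f}{4f}=\tfrac{3|\nabla f|}{4f}$, which is constant along each level curve because $X_2f=0$ and $X_2(X_1f)=[X_2,X_1]f+X_1(X_2f)=0$ by \eqref{eq:nabla}. Hence the level curves are circles, and substituting $|\nabla K|=\tfrac32 f|\nabla f|$ and $f^2=\tfrac43(c-K)$ rewrites their curvature as $\kappa=\tfrac{3|\nabla K|}{8(c-K)}$.

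\emph{Sufficiency.} Fix $p_0\in M$. By Theorem~\ref{thm:metric} there is a positively oriented parametrization $X(u,s)$ on a simply connected neighborhood $U$ of $p_0$ with $K=K(u)$ and the connection formulas of item~(e). Set $f:=\tfrac{2}{\sqrt3}(c-K)^{1/2}>0$ and define the $g$-selfadjoint bundle endomorphism $A$ by $AX_1=-\tfrac{f}{2}X_1$, $AX_2=\tfrac{3f}{2}X_2$. Then $\trace A=f$ and $\det A=-\tfrac{3f^2}{4}=-(c-K)$, so the Gauss equation $K=\det A+c$ holds identically. For the Codazzi equation $(\nabla_XA)Y=(\nabla_YA)X$ it suffices, by skew-symmetry, to verify $(\nabla_{X_1}A)X_2=(\nabla_{X_2}A)X_1$; using $\nabla_{X_1}X_1=\nabla_{X_1}X_2=0$, $\nabla_{X_2}X_1=\kappa X_2$, $\nabla_{X_2}X_2=-\kappa X_1$ and $X_2f=0$, both sides are multiples of $X_2$ and the equation collapses to $\tfrac32 X_1f+2f\kappa=0$, which is immediate from $\kappa=\tfrac{3X_1K}{8(c-K)}$ together with $X_1K=-\tfrac{3f}{2}X_1f$. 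By Bonnet's theorem in $N^3(c)$, the pair $(g,A)$ on $U$ is realized by an isometric immersion $\psi\colon U\to N^3(c)$ with shape operator $A$; after shrinking $U$, $\psi$ is an embedding. Its mean curvature is $f=\trace A>0$ with $\nabla f\neq 0$, and since $\nabla f$ is proportional to $\nabla K$, hence to $X_1$, we have $A(\nabla f)=-\tfrac{f}{2}\nabla f$, so $\psi$ is biconservative by \eqref{eq:bicons}.

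The main obstacle here is essentially bookkeeping rather than a new difficulty: one must keep straight the sign relating the $\nabla K$-frame of Theorem~\ref{thm:metric} to the $\nabla f$-frame of \cite{CMOP} (they differ because $\nabla K=-\tfrac32 f\,\nabla f$), isolate correctly the single nontrivial component of the Codazzi tensor, and check that the curvature of the level curves comes out with the exact constant $\tfrac{3|\nabla K|}{8(c-K)}$. All of the substantive input---the explicit metric, the connection coefficients, and the defining ODE for $K$---has already been assembled in Theorems~\ref{thm:CMOP} and~\ref{thm:metric}, so the argument is short once these are in place.
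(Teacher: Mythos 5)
Your proposal is correct and follows essentially the same route as the paper: for sufficiency you build the same shape operator $A$ with eigenvalues $-f/2$ and $3f/2$ in the frame supplied by Theorem~\ref{thm:metric}, verify Gauss and Codazzi, and invoke the fundamental theorem to realize $M$ locally in $N^3(c)$, concluding biconservativity from $A(\nabla f)=-\tfrac{f}{2}\nabla f$; the only cosmetic difference is that you work in the $\nabla K$-frame throughout instead of passing to $\tilde X_1=-X_1$, which changes nothing since $A$ is determined by its eigenspaces. For necessity you actually derive the level-curve condition from \eqref{eq:Gauss} and \eqref{eq:nabla}, whereas the paper simply cites \cite{CMOP} and \cite{Fu}; your computation is correct and makes that direction self-contained.
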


\begin{proof} The direct implication was proved in \cite{CMOP} and \cite{Fu}.

To prove the converse, let us consider $X_1=(\nabla K)/|\nabla K|$ and $X_2\in C(TM)$ two vector fields such that $\{X_1(p),X_2(p)\}$ is a positively oriented orthonormal basis at any point $p\in M$. From Theorem \ref{thm:metric} we have seen that the Levi-Civita connection on $(M^2,g)$ is given by
$$
\nabla_{X_1}X_1=\nabla_{X_1}X_2=0,\quad \nabla_{X_2}X_2=-\frac{3X_1K}{8(c-K)}X_1,\quad \nabla_{X_2}X_1=\frac{3X_1K}{8(c-K)}X_2.
$$

Now, consider $f=(2/\sqrt{3})\sqrt{c-K}>0$ and, since $X_2K=0$, we easily get
$$
\nabla f=-\frac{X_1K}{\sqrt{3(c-K)}}=-\frac{\nabla K}{\sqrt{3(c-K)}}.
$$
Define $\tilde X_1=(\nabla f)/|\nabla f|=-X_1$ and $\tilde X_2=-X_2$ and then
$$
\nabla_{\tilde X_1}\tilde X_1=\nabla_{\tilde X_1}\tilde X_2=0
$$
and
$$
\nabla_{\tilde X_2}\tilde X_2=\nabla_{X_2}X_2=-\frac{3\tilde X_1K}{8(c-K)}\tilde X_1,\quad \nabla_{\tilde X_2}\tilde  X_1=\nabla_{X_2}X_1=\frac{3\tilde X_1K}{8(c-K)}\tilde X_2.
$$
Since $(\tilde X_1f)/f=-(\tilde X_1K)/(2(c-K))$, we obtain
$$
\nabla_{\tilde X_2}\tilde X_2=\frac{3\tilde X_1f}{4f}\tilde X_1\quad\textnormal{and}\quad\nabla_{\tilde X_2}\tilde  X_1=-\frac{3\tilde X_1f}{4f}\tilde X_2.
$$ 

Let us now consider a tensor field $A$ of type $(1,1)$ on $M$ defined by
$$
A\tilde X_1=-\frac{f}{2}\tilde X_1\quad\textnormal{and}\quad A\tilde X_2=\frac{3f}{2}\tilde X_2.
$$
It is straightforward to verify that $A$ satisfies the Gauss equation
$$
K=c+\det A
$$
and the Codazzi equation
$$
(\nabla_{\tilde X_1}A)\tilde X_2=(\nabla_{\tilde X_2}A)\tilde X_1,
$$
which means that $M$ can be locally isometrically embedded in $N^3(c)$ with $A$ its shape operator. Moreover, from the definition of $A$ it is easy to see that
$$
A(\nabla f)=-\frac{f}{2}\nabla f,
$$
and, from \eqref{eq:bicons}, it follows that $M$ is a biconservative surface in $N^3(c)$.
\end{proof}

\begin{remark} If the surface $M$ in Theorem \ref{thm:char} is simply connected, then the theorem holds globally, but, in this case, instead of a local isometric embedding we have a global isometric immersion.
\end{remark}

\begin{remark} Let $(M^2,g)$ be a simply connected Riemannian surface and $c\in\mathbb{R}$ a constant. If $M$ admits two biconservative isometric immersions in $N^3(c)$ such that their mean curvatures are positive with gradients different from zero at any point $p\in M$, then the two immersions differ by an isometry of $N^3(c)$.
\end{remark}

\end{document}